\newtheorem{theorem}{Theorem}[section]
\theoremstyle{definition}
\newtheorem{definition}[theorem]{Definition}
\newtheorem{example}[theorem]{Example}
\newtheorem{question}[theorem]{Question}
\theoremstyle{remark}
\numberwithin{equation}{section}
\begin{document}
\title[ Controlled rectangular metric type spaces and some applications to polynomial equations]
{Controlled rectangular metric type spaces and some applications to polynomial equations}

\author[Nabil Mlaiki]
{Nabil Mlaiki}  % in alphabetical order

\address{Nabil Mlaiki \newline
 Department of Mathematics and General Sciences, Prince Sultan University
 Riyadh, Saudi Arabia 11586}
\email{nmlaiki@psu.edu.sa, nmlaiki2012@gmail.com}

%\thanks{Submitted August .., .... Published .....}

\subjclass[2010]{47H10, 54H25.}
\keywords{Controlled rectangular $b-$metric spaces, Fixed point, Zeros of high degree polynomials.}

\begin{abstract}
In this paper, we introduce a generalization of rectangular $b-$metric spaces, by changing the rectangular inequality as
follows
\begin{equation*}
\rho(x,y)\le \theta(x,y,u,v)[\rho(x,u)+\rho(u,v)+\rho(v,y)],
\end{equation*}%
for all distinct$\ x,y,u,v\in X.$ We prove some fixed-point theorems and
we use our results to present a nice application in last section of this paper.
Moreover, in the conclusion we present some new open questions.\newline
\end{abstract}

\maketitle

\section{Introduction}
In the last two decades, the generalization of metric spaces has been the focus of many researchers, and that is due to the importance of metric spaces and fixed point theory in solving open problems in many different fields. So, first of all we remind the reader of the definition of metric spaces.

\begin{definition}(Metric spaces)
Let $X$ be a nonempty set. A mapping $D:X^{2}\rightarrow [0,\infty)$ is called a metric on $X$ if for any $x,y,z \in X$ the following conditions are satisfied:\\
 ($R_1$) $x=y$ if and only if $D(x,y)=0$; \\
 ($R_2$) $D(x,y)=D(y,x)$;\\
 ($R_3$) $D(x,y)\leq D(x,z) +D(z,y)$.\\
In this case, the pair $(X,D)$ is called a metric space.
\end{definition}
A generalization of metric spaces to $b-$metric spaces was introduced, and we refer the reader to \cite{G2}. In 2017, Kamran in \cite{Kamran}, introduced an interesting generalization of the $b-$metric spaces called extended $b-$metric spaces and defined as follows.
\begin{definition}\cite{Kamran}
 Given a function $\theta: X\times X\rightarrow [1,\infty)$, where $X$ is a nonempty set.  The function $B:X\times X\rightarrow\mathbb [0,\infty)$ is called an extended $b$-metric if
\begin{enumerate}
\item $B(x,y)=0 \Longleftrightarrow x=y$;
\item $B(x,y) = B(y,x)$;
\item $B(x,y)  \leq \theta(x,y) [B(x,z) + B(z,y)]$,
\end{enumerate}
for all $x,y,z \in X$.
\end{definition}

In 2000, Branciari in \cite{Branc} introduced the concept of rectangular metric spaces. In 2015, George et al. in\cite{Geo}, generalized rectangular metric spaces to rectangular $b-$metric spaces. In this paper, and inspired by the work of Karman in \cite{Kamran}, we give a generalization to the rectangular $b-$metric spaces, called controlled rectangular $b-$metric spaces, but first we would like to remind the reader of the definitions of both spaces.
\begin{definition}\cite{Branc}\label{def3} (Rectangular (or Branciari) metric spaces))
Let $X$ be a nonempty set. A mapping $L:X^{2}\rightarrow [0,\infty)$ is called a rectangular metric on $X$ if for any $x,y \in X$ and all distinct points $u,v \in X\setminus \{x,y\}$, it satisfies the following conditions:\\
 ($R_1$) $x=y$ if and only if $L(x,y)=0$; \\
 ($R_2$) $L(x,y)=L(y,x)$;\\
 ($R_3$) $L(x,y)\leq L(x,u) + L(u,v)+L(v,y)$.\\
In this case, the pair $(X,L)$ is called a rectangular metric space.
\end{definition}

\begin{definition}\cite{Geo}\label{def5} (Rectangular $b-$metric spaces))
Let $X$ be a nonempty set. A mapping $L:X^{2}\rightarrow [0,\infty)$ is called a rectangular $b-$metric on $X$ if there exists a constant
$a\ge 1$ such that for any $x,y \in X$ and all distinct points $u,v \in X\setminus \{x,y\}$, it satisfies the following conditions:\\
 ($R_{b1}$) $x=y$ if and only if $L(x,y)=0$; \\
 ($R_{b2}$) $L(x,y)=L(y,x)$;\\
 ($R_{b3}$) $L(x,y)\leq a[L(x,u) + L(u,v)+L(v,y)]$.\\
In this case, the pair $(X,L)$ is called a rectangular $b-$metric space.
\end{definition}
As a generalization of the rectangular metric spaces and rectangular $b-$metric spaces, we define controlled rectangular $b-$metric spaces as follows.
\begin{definition}
Let $X$ be a non empty set, a function $\theta:X^{4}\rightarrow [1,\infty)$\\ and $\rho :X^{2}\rightarrow [0,\infty).$
We say that $(X,\rho)$ is a controlled rectangular $b-$metric space if all distinct $x,y,u,v \in X$ we have:
\begin{enumerate}
\item $\rho(x,y)=0$  if and only if $x=y;$
\item $\rho(x,y)=\rho(y,x);$
\item $\rho(x,y)\le \theta(x,y,u,v)[\rho(x,u)+\rho(u,v)+\rho(v,y)].$
\end{enumerate}
\end{definition}
\begin{definition}\label{d2}
Let $(X,\rho)$ be controlled rectangular $b-$metric space,
\begin{enumerate}
\item A sequence $\{x_{n}\}$ is called $\rho-$convergent in a controlled rectangular $b-$metric space $(X,\rho),$ if there exists $x\in X$
such that $\lim_{n\rightarrow \infty}\rho(x_{n},\nu)=\rho(\nu, \nu).$
\item A sequence $\{x_{n}\}$ is called $\rho-$Cauchy if and only if $\lim_{n,m\rightarrow \infty}\rho(x_{n},x_{m})
 \ \ \text{exists and finite}.$
\item A controlled rectangular $b-$metric space $(X,\rho)$ is called $\rho-$complete if for every $\rho-$Cauchy
sequence $\{x_n\}$ in $X$, there exists $\nu\in X$, such that $\lim_{n\rightarrow \infty}\rho(x_{n},\nu)= \lim_{n,m\rightarrow \infty}\rho_{r}(x_{n},x_m)=\rho_{r}(\nu,\nu).$
\item Let $a\in X$ define an open ball in a controlled rectangular $b-$metric space $(X,\rho)$ by $B_{\rho}(a, \eta)=\{b\in X\mid \rho(a,b)<\eta\}.$
\end{enumerate}
\end{definition}
Notice that, rectangular metric spaces and rectangular $b-$metric spaces are controlled rectangular $b-$metric spaces, but the converse is not always true. In he following example, we present a controlled rectangular $b-$metric space that is not a rectangular metric space.
\begin{example}
Let $X=Y\cup Z$ where $Y=\{\frac{1}{m}\mid m \ \ \text{is a natural number}\}$ and $Z$ be the set of positive integers.
We define $\rho :X^{2}\rightarrow [0,\infty)$ by
\[
\rho(x,y)=\begin{cases}
      0, &\text{if and only if} \ \ x=y\\
      2\beta, &\text{if}\ \ x,y\in Y\\
      \frac{\beta}{2}, &\textrm{otherwise},

   \end{cases}
   \]
where, $\beta$ is a constant bigger than $0.$
Now, define $\theta:X^{4}\rightarrow [1,\infty)$ by\\
$\theta(x,y,u,v)=\max\{x,y,u,v\}+2\beta.$
It is not difficult to check that $(X,\rho)$ is a controlled rectangular $b-$metric space.
However, $(X,\rho)$ is not a rectangular metric space, for instance notice that
$\rho(\frac{1}{2},\frac{1}{3})=2\beta> \rho(\frac{1}{2},2)+\rho(2,3)+\rho(3,\frac{1}{3})=\frac{3\beta}{2}.$

\end{example}

\section{Main Results}

\begin{theorem}\label{one}
Let $(X,\rho)$ is a controlled rectangular $b-$metric space, and $T$ a self mapping on $X.$
If there exists $0<k<1,$ such that  $$\rho(Tx,Ty)\le k\rho(x,y)$$ and
$$\sup_{m>1}\lim_{n\rightarrow \infty} \theta(x_{n},x_{n+1},x_{n+2},x_{m})\le \frac{1}{k},$$
then $T$ has a unique fixed point in $X.$
\end{theorem}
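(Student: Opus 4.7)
The plan is to run the classical Banach--Picard argument in this controlled rectangular setting. Fix any $x_0\in X$ and set $x_{n+1}=Tx_n$. If $x_n = x_{n+1}$ for some $n$ we are done, so I assume the orbit consists of pairwise distinct points (the cases in which two non-adjacent iterates collide can be excluded directly from the contraction, which forces such an orbit to collapse). Iterating the contraction immediately gives the two geometric estimates
$$\rho(x_n,x_{n+1})\le k^n\rho(x_0,x_1), \qquad \rho(x_n,x_{n+2})\le k^n\rho(x_0,x_2),$$
and both are essential because the rectangular inequality requires jumps of two indices at a time.

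The technical core is to show that $\{x_n\}$ is $\rho$-Cauchy. For $m>n+2$, I apply the controlled rectangular inequality with $u=x_{n+1}$, $v=x_{n+2}$ and then iteratively unfold the residual term $\rho(x_{n+2},x_m)$ in jumps of two indices. Telescoping produces a bound of the form
$$\rho(x_n,x_m)\le \sum_{i\ge 0}\Big(\prod_{l=0}^{i}\theta(x_{n+2l},x_m,x_{n+2l+1},x_{n+2l+2})\Big)\,k^{n+2i}\bigl(\rho(x_0,x_1)+\rho(x_0,x_2)\bigr)+R,$$
where $R$ is a boundary remainder that vanishes when $m-n$ is even and reduces to a geometrically small $\rho(x_{m-1},x_m)$ when $m-n$ is odd. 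The hypothesis $\sup_{m>1}\lim_{n\to\infty}\theta(x_n,x_{n+1},x_{n+2},x_m)\le 1/k$ means each newly introduced $\theta$-factor is asymptotically absorbed by a factor of $k$, so a ratio-test argument makes the series summable and its tail arbitrarily small as $n\to\infty$.

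Assuming $\rho$-completeness (which the theorem implicitly needs), let $\nu$ be the limit of $\{x_n\}$. To identify $\nu$ as a fixed point I would apply the rectangular inequality
$$\rho(\nu,T\nu)\le \theta(\nu,T\nu,x_{n+1},x_{n+2})\bigl[\rho(\nu,x_{n+1})+\rho(x_{n+1},x_{n+2})+\rho(x_{n+2},T\nu)\bigr],$$
and note that $\rho(\nu,x_{n+1})\to 0$ by convergence, $\rho(x_{n+1},x_{n+2})\to 0$ geometrically, and $\rho(x_{n+2},T\nu)=\rho(Tx_{n+1},T\nu)\le k\rho(x_{n+1},\nu)\to 0$; provided the $\theta$-factor remains finite in the limit, we conclude $\rho(\nu,T\nu)=0$. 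Uniqueness is straightforward: any two fixed points $\mu,\nu$ satisfy $\rho(\mu,\nu)=\rho(T\mu,T\nu)\le k\rho(\mu,\nu)$, forcing $\rho(\mu,\nu)=0$.

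The main obstacle I anticipate is the Cauchy estimate. The rectangular inequality is a four-point condition, so iterating it produces a \emph{growing product} of $\theta$-factors that has to be tamed by the geometric decay $k^n$; moreover the parity of $m-n$ forces a case split at the last step, and the chain of iterates must stay pairwise distinct for the inequality to apply at every stage. The calibration $\sup\lim\theta\le 1/k$ is exactly sharp, compensating one factor of $k$ per expansion step, so the argument relies on asymptotic rather than uniform bounds and must be written carefully as a tail estimate.
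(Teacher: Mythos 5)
Your proposal is correct and follows essentially the same route as the paper: Picard iteration, the two geometric estimates $\rho(x_n,x_{n+1})\le k^n\rho(x_0,x_1)$ and $\rho(x_n,x_{n+2})\le k^n\rho(x_0,x_2)$, a two-index unfolding of the rectangular inequality with a parity case split, the $\sup\lim\theta\le 1/k$ condition absorbing the growing $\theta$-product so the series converges, and then the standard identification of the limit as the unique fixed point (including the separate treatment of colliding iterates, which is the paper's Case 1). The only slip is cosmetic: the non-vanishing boundary remainder $\rho(x_{m-2},x_m)$ occurs when $m-n$ is \emph{even}, not odd, which is exactly why the two-step estimate is needed — and you rightly note that $\rho$-completeness is used though not stated in the theorem.
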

\begin{proof}
Let $x_{0}\in X$ and define the sequence $\{x_{n}\}$ as follows
$x_{1}=Tx_{0},x_{2}=T^{2}x_{0},\cdots,x_{n}=T^{n}x_{0},\cdots$
Now, by the hypothesis of theorem we have
\begin{align*}
\rho(x_{n},x_{n+1})&\le k\rho(x_{n-1},x_{n})\\
&\le  k^{2}\rho(x_{n-2},x_{n-1})\\ & \le \cdots\\ & \le  k^{n}\rho(x_{0},x_{1}).
\end{align*}
Note that, if we take the limit of the above inequality as $n\rightarrow \infty$
we deduce that
$\rho(x_{n},x_{n+1})\rightarrow 0\ \ \ \text{as} \ \ \ n\rightarrow \infty  \ \ \label{E1}$
Denote by $\rho_{i}=\rho(x_{n+i},x_{n+i+1})$
For all $n\geq 1,$ we have two cases.\\
\textbf{Case 1:} Let $x_n=x_m$ for some integers $n\neq m$. So, if for $m>n$ we have $T^{m-n}(x_n)=x_{n}$. Choose $y=x_{n}$ and $p=m-n$. Then
$T^py=y,$ and that is, $y$ is a periodic point of $T$. Thus,
$\rho(y,Ty)=\rho(T^py,T^{p+1}y)\leq  k^p \rho(y,Ty).$
Since $k\in (0,1)$, we get $\rho(y,Ty)=0$, so $y=Ty$, that is, $y$ is a fixed point of $T$.\\
\textbf{Case 2:} Suppose that $T^nx\neq T^mx$ for all integers $n\neq m$.
Let $n<m$ be two natural numbers, to show that $\{x_{n}\}$ is a $\rho-$Cauchy sequence, we need to consider two subcases:\\
Subcase 1: Assume that $m=n+2p+1.$ %%%%%%%%%%%
By property $(3)$ of the controlled rectangular $b-$metic spaces we have,
\vspace{-2cm}
\begin{align*}
\rho(x_{n},x_{n+2p+1})&\le \theta(x_{n},x_{n+1},x_{n+2},x_{n+2p+1})[\rho(x_{n},x_{n+1})+\rho(x_{n+1},x_{n+2})+\rho(x_{n+2},x_{n+2p+1})]\\
&\le \theta(x_{n},x_{n+1},x_{n+2},x_{n+2p+1})\rho(x_{n},x_{n+1})+ \theta(x_{n},x_{n+1},x_{n+2},x_{n+2p+1})\rho(x_{n+1},x_{n+2})\\ &+ \theta(x_{n},x_{n+1},x_{n+2},x_{n+2p+1})
\theta(x_{n+2},x_{n+3},x_{n+4},x_{n+2p+1})[\rho(x_{n+2},x_{n+3})\\ &+\rho(x_{n+3},x_{n+4})+\rho(x_{n+4},x_{n+2p+1})]\\&\le \theta(x_{n},x_{n+1},x_{n+2},x_{n+2p+1})\rho(x_{n},x_{n+1})+ \theta(x_{n},x_{n+1},x_{n+2},x_{n+2p+1})\rho(x_{n+1},x_{n+2})\\ &+ \theta(x_{n},x_{n+1},x_{n+2},x_{n+2p+1})
\theta(x_{n+2},x_{n+3},x_{n+4},x_{n+2p+1})\rho(x_{n+2},x_{n+3})\\ &+\theta(x_{n},x_{n+1},x_{n+2},x_{n+2p+1})
\theta(x_{n+2},x_{n+3},x_{n+4},x_{n+2p+1})\rho(x_{n+3},x_{n+4})\\
&+ \theta(x_{n},x_{n+1},x_{n+2},x_{n+2p+1})
\theta(x_{n+2},x_{n+3},x_{n+4},x_{n+2p+1})\rho(x_{n+4},x_{n+2p+1})
\\&\le \cdots\\
& \le \theta(x_{n},x_{n+1},x_{n+2},x_{n+2p+1})\rho(x_{n},x_{n+1})+ \theta(x_{n},x_{n+1},x_{n+2},x_{n+2p+1})\rho(x_{n+1},x_{n+2})\\ &+ \theta(x_{n},x_{n+1},x_{n+2},x_{n+2p+1})
\theta(x_{n+2},x_{n+3},x_{n+4},x_{n+2p+1})\rho(x_{n+2},x_{n+3})\\ &+\theta(x_{n},x_{n+1},x_{n+2},x_{n+2p+1})
\theta(x_{n+2},x_{n+3},x_{n+4},x_{n+2p+1})\rho(x_{n+3},x_{n+4})\\
&+ \cdots +\theta(x_{n},x_{n+1},x_{n+2},x_{n+2p+1})
\theta(x_{n+2},x_{n+3},x_{n+4},x_{n+2p+1})\\ &\cdots \theta(x_{n+2p-2},x_{n+2p-1},x_{n+2p},x_{n+2p+1}
)\rho(x_{n+2p},x_{n+2p+1})\\
& \le \theta(x_{n},x_{n+1},x_{n+2},x_{n+2p+1})\rho_{0}+ \theta(x_{n},x_{n+1},x_{n+2},x_{n+2p+1})\rho_{1}\\ &+ \theta(x_{n},x_{n+1},x_{n+2},x_{n+2p+1})
\theta(x_{n+2},x_{n+3},x_{n+4},x_{n+2p+1})\rho_{2}\\ &+\theta(x_{n},x_{n+1},x_{n+2},x_{n+2p+1})
\theta(x_{n+2},x_{n+3},x_{n+4},x_{n+2p+1})\rho_{3}\\
&+ \cdots \\ &+\theta(x_{n},x_{n+1},x_{n+2},x_{n+2p+1})
\theta(x_{n+2},x_{n+3},x_{n+4},x_{n+2p+1})\times \cdots \\
 &\times \cdots \theta(x_{n+2p-2},x_{n+2p-1},x_{n+2p},x_{n+2p+1}
)\rho_{2p}\\
& = \theta(x_{n},x_{n+1},x_{n+2},x_{n+2p+1})[\rho_{0}+ \rho_{1}]\\ &+ \theta(x_{n},x_{n+1},x_{n+2},x_{n+2p+1})
\theta(x_{n+2},x_{n+3},x_{n+4},x_{n+2p+1})[\rho_{2}+\rho_{3}]\\
&+ \cdots+\theta(x_{n},x_{n+1},x_{n+2},x_{n+2p+1})
\theta(x_{n+2},x_{n+3},x_{n+4},x_{n+2p+1})\times \cdots \\
 &\times \cdots \theta(x_{n+2p-2},x_{n+2p-1},x_{n+2p},x_{n+2p+1})[\rho_{2p-1}+\rho_{2p}]
\end{align*}
\begin{align*}
 &\le \theta(x_{n},x_{n+1},x_{n+2},x_{n+2p+1})[(k^{n}+k^{n+1})\rho(x_{0},x_{1})]\\ &+ \theta(x_{n},x_{n+1},x_{n+2},x_{n+2p+1})
\theta(x_{n+2},x_{n+3},x_{n+4},x_{n+2p+1})[(k^{n+2}+k^{n+3})\rho(x_{0},x_{1})]\\
&+ \cdots+\theta(x_{n},x_{n+1},x_{n+2},x_{n+2p+1})
\theta(x_{n+2},x_{n+3},x_{n+4},x_{n+2p+1})\times \cdots \\
 &\times \cdots \theta(x_{n+2p-2},x_{n+2p-1},x_{n+2p},x_{n+2p+1})[(k^{n+2p-2}+k^{n+2p-1})\rho(x_{0},x_{1})]\\ &
 \le [\theta(x_{n},x_{n+1},x_{n+2},x_{n+2p+1})(k^{n}+k^{n+1})\\ &+ \theta(x_{n},x_{n+1},x_{n+2},x_{n+2p+1})
\theta(x_{n+2},x_{n+3},x_{n+4},x_{n+2p+1})(k^{n+2}+k^{n+3})+ \\ &
\cdots+\theta(x_{n},x_{n+1},x_{n+2},x_{n+2p+1})
\theta(x_{n+2},x_{n+3},x_{n+4},x_{n+2p+1})\times \cdots\\ &\times \cdots \theta(x_{n+2p-2},x_{n+2p-1},x_{n+2p},x_{n+2p+1})(k^{n+2p-2}+k^{n+2p-1})]\rho(x_{0},x_{1})\\
& =\sum_{l=0}^{p-1}\prod_{i=0}^{l}\theta(x_{n+2i},x_{n+2i+1},x_{n+2i+2},x_{n+2p+1})[k^{n+2l}+k^{n+2l+1}]\rho(x_{0},x_{1})\\
&= \sum_{l=0}^{p-1}\prod_{i=0}^{l}\theta(x_{n+2i},x_{n+2i+1},x_{n+2i+2},x_{n+2p+1})[1+k]k^{n+2l}\rho(x_{0},x_{1})
\end{align*}
Now, using the fact that $k<1$ the above inequalities implies the following:
$$ \rho(x_{n},x_{n+2p+1})< \sum_{l=0}^{p-1}\prod_{i=0}^{l}\theta(x_{n+2i},x_{n+2i+1},x_{n+2i+2},x_{n+2p+1})2k^{n+2l}\rho(x_{0},x_{1}).$$
Since, $\sup_{m>1}\lim_{n\rightarrow \infty} \theta(x_{n},x_{n+1},x_{n+2},x_{m})\le \frac{1}{k},$ we deduce,
\begin{align*}
\lim_{n,p\rightarrow \infty}\rho(x_{n},x_{n+2p+1})&< \sum_{l=0}^{\infty}\prod_{i=0}^{l}\theta(x_{n+2i},x_{n+2i+1},x_{n+2i+2},x_{n+2p+1})2k^{n+2l}\rho(x_{0},x_{1})\\
&\le \sum_{l=0}^{\infty}\frac{1}{k^{l+1}}2k^{n+2l}\rho(x_{0},x_{1})\\
&\le \sum_{l=0}^{\infty}2k^{n+l-1}\rho(x_{0},x_{1}).
\end{align*}
Note that, the series $\sum_{l=0}^{\infty}2k^{n+l-1}\rho(x_{0},x_{1})$ converges by the ratio test, which implies that
$\rho(x_{n},x_{n+2p+1}) \ \ \text{converges} \ \ \text{as} \ \ n,p\rightarrow \infty.$\\
Subcase 2: $m=n+2p$
Fist of all, Note that
\begin{align*}
\rho(x_{n},x_{n+2})&\le k\rho(x_{n-1},x_{n+1})\\&\le k^{2}\rho(x_{n-2},x_{n})\\&\le \cdots \\&\le k^{n}\rho(x_{0},x_{2})
\end{align*}
which leads us to conclude that
$\rho(x_{n},x_{n+2}) \rightarrow 0 \ \ \text{as} \ \ n\rightarrow \infty.$
Similarly to Subcase 1 we have:\vspace{-3cm}
\begin{align*}
\rho(x_{n},x_{n+2p})&\le \theta(x_{n},x_{n+1},x_{n+2},x_{n+2p})[\rho(x_{n},x_{n+1})+\rho(x_{n+1},x_{n+2})+\rho(x_{n+2},x_{n+2p})]\\
&\le \theta(x_{n},x_{n+1},x_{n+2},x_{n+2p})\rho(x_{n},x_{n+1})+ \theta(x_{n},x_{n+1},x_{n+2},x_{n+2p})\rho(x_{n+1},x_{n+2})\\ &+ \theta(x_{n},x_{n+1},x_{n+2},x_{n+2p})
\theta(x_{n+2},x_{n+3},x_{n+4},x_{n+2p})[\rho(x_{n+2},x_{n+3})\\ &+\rho(x_{n+3},x_{n+4})+\rho(x_{n+4},x_{n+2p})]\\&\le \theta(x_{n},x_{n+1},x_{n+2},x_{n+2p})\rho(x_{n},x_{n+1})+ \theta(x_{n},x_{n+1},x_{n+2},x_{n+2p})\rho(x_{n+1},x_{n+2})\\ &+ \theta(x_{n},x_{n+1},x_{n+2},x_{n+2p})
\theta(x_{n+2},x_{n+3},x_{n+4},x_{n+2p})\rho(x_{n+2},x_{n+3})\\ &+\theta(x_{n},x_{n+1},x_{n+2},x_{n+2})
\theta(x_{n+2},x_{n+3},x_{n+4},x_{n+2p})\rho(x_{n+3},x_{n+4})\\
&+ \theta(x_{n},x_{n+1},x_{n+2},x_{n+2p})
\theta(x_{n+2},x_{n+3},x_{n+4},x_{n+2p})\rho(x_{n+4},x_{n+2p})\\ & \le
\theta(x_{n},x_{n+1},x_{n+2},x_{n+2p})\rho_{0}+ \theta(x_{n},x_{n+1},x_{n+2},x_{n+2p})\rho_{1}\\ &+ \theta(x_{n},x_{n+1},x_{n+2},x_{n+2p})
\theta(x_{n+2},x_{n+3},x_{n+4},x_{n+2p})\rho_{2}\\ &+\theta(x_{n},x_{n+1},x_{n+2},x_{n+2p})
\theta(x_{n+2},x_{n+3},x_{n+4},x_{n+2p})\rho_{3}\\
&+ \cdots \\ &+\theta(x_{n},x_{n+1},x_{n+2},x_{n+2p})
\theta(x_{n+2},x_{n+3},x_{n+4},x_{n+2p})\times \cdots \\
 &\times \cdots \theta(x_{n+2p-3},x_{n+2p-2},x_{n+2p-1},x_{n+2p}
)\rho_{2p}\\ &+ \prod_{i=0}^{2p-2} \theta(x_{n+2i},x_{n+2i+1},x_{n+2i+1},x_{n+2p})\rho(x_{n+2p-2
},x_{n+2p})\\
& =\sum_{l=0}^{p-1}\prod_{i=0}^{l}\theta(x_{n+2i},x_{n+2i+1},x_{n+2i+2},x_{n+2p+1})[k^{n+2l}+k^{n+2l+1}]\rho(x_{0},x_{1})\\
&+ \prod_{i=0}^{2p-2} \theta(x_{n+2i},x_{n+2i+1},x_{n+2i+1},x_{n+2p})\rho(x_{n+2p-2
},x_{n+2p})\\
&= \sum_{l=0}^{p-1}\prod_{i=0}^{l}\theta(x_{n+2i},x_{n+2i+1},x_{n+2i+2},x_{n+2p+1})[1+k]k^{n+2l}\rho(x_{0},x_{1})\\
&+\prod_{i=0}^{2p-2} \theta(x_{n+2i},x_{n+2i+1},x_{n+2i+1},x_{n+2p})\rho(x_{n+2p-2
},x_{n+2p})\\
&\le \sum_{l=0}^{p-1}\prod_{i=0}^{l}\theta(x_{n+2i},x_{n+2i+1},x_{n+2i+2},x_{n+2p+1})[1+k]k^{n+2l}\rho(x_{0},x_{1})\\
&+\prod_{i=0}^{2p-2} \theta(x_{n+2i},x_{n+2i+1},x_{n+2i+1},x_{n+2p})k^{n+2p-2}\rho(x_{0
},x_{2})
\end{align*}
Since, $\sup_{m>1}\lim_{n\rightarrow \infty} \theta(x_{n},x_{n+1},x_{n+2},x_{m})\le \frac{1}{k},$ we deduce,
\begin{align*}
\lim_{n,p\rightarrow \infty}\rho(x_{n},x_{n+2p})
&\le \lim_{n,p\rightarrow \infty}\sum_{l=0}^{p-1}\frac{1}{k^{l+1}}[1+k]k^{n+2l}\rho(x_{0},x_{1})
+k^{2p-1}k^{n+2p-2}\rho(x_{0
},x_{2})\\
&= \lim_{n,p\rightarrow \infty}\sum_{l=0}^{p-1}[1+k]k^{n+l-1}\rho(x_{0},x_{1})
+k^{n-1}\rho(x_{0
},x_{2})\\
&\le \sum_{m=0}^{\infty}[1+k]k^{m}\rho(x_{0},x_{1})
+k^{m}\rho(x_{0
},x_{2})
\end{align*}
By using the Ratio Test, it is not difficult to see that the series
$$\sum_{m=0}^{\infty}[1+k]k^{m}\rho(x_{0},x_{1})
+k^{m}\rho(x_{0},x_{2})$$ converges. Hence, $\rho(x_{n},x_{n+2p})$ converges as $n,p\ \ \text{go toward } \ \ \infty.$
Thus, by subcase 1 and subcase 2, we deduce that the sequence $\{x_{n}\}$ is a $\rho-$Cauchy sequence.
Since $(X,\rho)$ is a $\rho-$complete extended rectangular $b-$metric space, we deduce that $\{x_{n}\}$ converges to some $\nu \in X.$
We claim that $\nu$ is fixed point of $T.$
Note that, if there exists  an integer $N$ such that $x_{N}=\nu$. Due to case 2, $T^nx \neq \nu$ for all $n>N$. Similarly, $T^nx \neq T\nu$ for all $n>N$. Hence, we are in case 1, so $\nu$ is a fixed point of $T$.\\
Also, if there exists  an integer $N$ such that $T^Nx=T\nu$. Again, necessarily $T^nx\neq \nu$  and $T^nx\neq T\nu$ for all $n>N$. Thus, $T\nu=\nu$.
Therefore, we may assume that for all $n$ we have $x_{n}\not\in \{\nu, T\nu\}.$
\begin{align*}
\rho(\nu,T\nu)&\le \theta(\nu , T\nu , x_{n},x_{n+1})[\rho(\nu ,x_{n})+\rho(x_{n},x_{n+1})+ \rho(x_{n+1},T\nu)]\\ &\le
\theta(\nu , T\nu , x_{n},x_{n+1})[\rho(\nu ,x_{n})+\rho(x_{n},x_{n+1})+ \rho(Tx_{n},T\nu)]\\ &\le \theta(\nu , T\nu , x_{n},x_{n+1})[\rho(\nu ,x_{n})+\rho(x_{n},x_{n+1})+k\rho(x_{n},\nu)]
\end{align*}
Now, taking the limit as $n\rightarrow \infty$ we deduce that
$\rho(\nu,T\nu)=0$ and that is $T\nu =\nu$ and $\nu$ is a fixed point of $T$ as desired.

Finally, to show uniqueness assume there exist two fixed points of $T$ say $\nu $ and $\mu$ such that $\nu\neq \mu.$ By the contractive property of $T$ we have:
$$ \rho(\nu, \mu)=\rho(T\nu, T\mu)\le k \rho(\nu, \mu)<\rho(\nu, \mu)$$
which leads us to contradiction.Thus, $T$ has a unique fixed point as required.
\end{proof}
\begin{theorem}
Let $(X,\rho)$ be a complete extended rectangular $b-$metric space, and $T$ a self mapping on $X$ satisfying the following condition;
for all $x,y\in X$ there exists $0<k<\frac{1}{2}$  such that
$$ \rho(Tx,Ty)\le k[\rho(x,Tx)+\rho(y,Ty)]$$
Also, if
$$\sup_{m>1}\lim_{n\rightarrow \infty} \theta(x_{n},x_{n+1},x_{n+2},x_{m})\le \frac{1}{k},$$
and for all $u,v \in X$ we have:

$$ \lim_{n\rightarrow \infty} \theta(u,v,x_{n},x_{n+1})\le 1,$$
then $T$ has a unique fixed point in $X.$

\end{theorem}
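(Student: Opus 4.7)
The plan is to run the Picard iteration $x_{n+1} = Tx_n$ from an arbitrary $x_0 \in X$, mirroring the structure of Theorem~\ref{one} but exploiting the Kannan-type form of the contractive inequality for two substantial simplifications. Applying the hypothesis at the pair $(x_{n-1}, x_n)$ gives
\[
\rho(x_n, x_{n+1}) \le k\bigl[\rho(x_{n-1}, x_n) + \rho(x_n, x_{n+1})\bigr],
\]
hence $\rho(x_n, x_{n+1}) \le h\,\rho(x_{n-1}, x_n)$ with $h := k/(1-k) \in (0,1)$ since $k < 1/2$. Iterating, $\rho(x_n, x_{n+1}) \le h^n \rho(x_0, x_1) \to 0$.

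The Kannan form makes the Cauchy property almost immediate and in particular bypasses the elaborate rectangular-inequality unwinding of Theorem~\ref{one}: for arbitrary $n, m$,
\[
\rho(x_n, x_m) = \rho(Tx_{n-1}, Tx_{m-1}) \le k\bigl[\rho(x_{n-1}, x_n) + \rho(x_{m-1}, x_m)\bigr] \le k\bigl(h^{n-1} + h^{m-1}\bigr)\rho(x_0, x_1),
\]
which tends to $0$ as $n, m \to \infty$. Thus $\{x_n\}$ is $\rho$-Cauchy, and by completeness there is $\nu \in X$ with $\rho(x_n, \nu) \to 0$. As in Theorem~\ref{one}, I would first dispose of the periodic subcase $x_n = x_m$ for some $n < m$: the telescoping bound $\rho(T^j y, T^{j+1} y) \le h^j \rho(y, Ty)$ applied to $y = x_n$ and $j = m-n$ gives $\rho(y, Ty) \le h^{m-n} \rho(y, Ty)$, which since $h < 1$ forces $y = Ty$. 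Otherwise the iterates are pairwise distinct, and after discarding finitely many indices we may also assume $x_n, x_{n+1} \notin \{\nu, T\nu\}$, so that axiom~(3) can be legally applied to the quadruple $\nu, T\nu, x_n, x_{n+1}$.

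To identify $\nu$ as a fixed point, write
\[
\rho(\nu, T\nu) \le \theta(\nu, T\nu, x_n, x_{n+1})\bigl[\rho(\nu, x_n) + \rho(x_n, x_{n+1}) + \rho(x_{n+1}, T\nu)\bigr],
\]
and bound the tail via Kannan, $\rho(x_{n+1}, T\nu) = \rho(Tx_n, T\nu) \le k\bigl[\rho(x_n, x_{n+1}) + \rho(\nu, T\nu)\bigr]$. Letting $n \to \infty$, the first two bracketed terms vanish, while the hypothesis $\lim_n \theta(\nu, T\nu, x_n, x_{n+1}) \le 1$ collapses the surviving coefficient of $\rho(\nu, T\nu)$ to $k$, yielding $\rho(\nu, T\nu) \le k\,\rho(\nu, T\nu)$ and hence $T\nu = \nu$. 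Uniqueness is then immediate: any second fixed point $\mu$ would satisfy $\rho(\mu, \nu) = \rho(T\mu, T\nu) \le k[\rho(\mu, T\mu) + \rho(\nu, T\nu)] = 0$. The main obstacle is this limit passage in the fixed-point identification: the global hypothesis $\sup \lim \theta \le 1/k$ would only produce the tautology $\rho(\nu, T\nu) \le \rho(\nu, T\nu)$, which is precisely why the separate pointwise assumption $\lim_n \theta(u, v, x_n, x_{n+1}) \le 1$ has been added to the statement.
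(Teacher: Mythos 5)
Your proposal is correct, and on the crucial Cauchy step it takes a genuinely different and cleaner route than the paper. The paper establishes that $\rho(x_n,x_{n+1})\to 0$ and $\rho(x_n,x_{n+2})\to 0$ and then simply refers back to the two-subcase rectangular-inequality unwinding of Theorem~\ref{one} (odd and even gaps, iterated products of $\theta$, comparison with a geometric series controlled by the hypothesis $\sup_{m}\lim_{n}\theta\le 1/k$). You instead observe that the Kannan form applies directly to the pair $(x_{n-1},x_{m-1})$, giving
\[
\rho(x_n,x_m)\le k\bigl[\rho(x_{n-1},x_n)+\rho(x_{m-1},x_m)\bigr]\le k\bigl(h^{n-1}+h^{m-1}\bigr)\rho(x_0,x_1)\to 0,
\]
which yields the Cauchy property with no appeal to axiom~(3) at all. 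This is a real simplification: it shows the hypothesis $\sup_{m}\lim_{n}\theta(x_n,x_{n+1},x_{n+2},x_m)\le 1/k$ is not actually needed for this theorem (only the pointwise condition $\lim_n\theta(u,v,x_n,x_{n+1})\le 1$ is used, in the fixed-point identification, exactly as you and the paper both use it). Your treatment of the degenerate periodic case and of the limit passage for $\rho(\nu,T\nu)$ matches the paper. Your uniqueness argument is also the right one for a Kannan map, $\rho(T\mu,T\nu)\le k[\rho(\mu,T\mu)+\rho(\nu,T\nu)]=0$; the paper instead recycles the Banach-contraction inequality $\rho(T\nu,T\mu)\le k\rho(\nu,\mu)$, which does not follow from the stated hypothesis, so on this point your version actually repairs a slip in the paper.
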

\begin{proof}
Let $x_{0}\in X$ and define the sequence $\{x_{n}\}$ as follows
$$x_{1}=Tx_{0}, x_{2}=Tx_{1}=T^{2}x_{0}, \cdots, x_{n}=Tx_{n-1}=T^{n}x_{0}, \cdots $$
First of all, Note that for all $n\ge 1$ we have
\begin{align*}
&\rho(x_{n},x_{n+1})\le k[\rho(x_{n-1},x_{n})+\rho(x_{n},x_{n+1})]\\&\Rightarrow (1-k)\rho(x_{n},x_{n+1})\le k\rho(x_{n-1},x_{n})\\ &\Rightarrow \rho(x_{n},x_{n+1})\le \frac{k}{1-k}\rho(x_{n-1},x_{n}).
\end{align*}
Since $0<k<\frac{1}{2}$ one can easily deduce that $0<\frac{k}{1-k}<1.$ So, let $\mu =\frac{k}{1-k}$

Hence,
\begin{align*}
\rho(x_{n},x_{n+1})&\le \mu \rho(x_{n-1},x_{n})\\
&\le
\mu^{2}\rho(x_{n-2},x_{n-1})\\ & \le \cdots\\ & \le  \mu^{n}\rho(x_{0},x_{1}).
\end{align*}
Therefore,
$$\rho(x_{n},x_{n+1})\rightarrow 0\ \ \ \text{as} \ \ \ n\rightarrow \infty  \ \ \label{E3}$$

Also, for all $n\ge 1$ we have

$$\rho(x_{n},x_{n+2})\le k[\rho(x_{n-1},x_{n})+\rho(x_{n+1},x_{n+2})]$$
Thus, by using the fact that $\rho(x_{n},x_{n+1})\rightarrow 0\ \ \ \text{as} \ \ \ n\rightarrow \infty,$ we deduce that
$$\rho(x_{n},x_{n+2})\rightarrow 0\ \ \ \text{as} \ \ \ n\rightarrow \infty.$$

Now, similarly to the prove of case 1 and case 2 of
Theorem \ref{one}, we deduce that the sequence $\{x_{n}\}$ is a $\rho-$Cauchy sequence.
Since $(X,\rho)$ is a $\rho-$complete extended rectangular $b-$metric space, we conclude that $\{x_{n}\}$ converges to some $\nu \in X.$
Using the argument in the prove of Theorem \ref{one}, we may assume that for all $n\ge 1 \ \ \text{we have}\ \ x_{n}\not\in \{\nu,T\nu\}.$ Thus,
\begin{align*}
\rho(\nu,T\nu)&\le \theta(\nu , T\nu , x_{n},x_{n+1})[\rho(\nu ,x_{n})+\rho(x_{n},x_{n+1})+ \rho(x_{n+1},T\nu)]\\ &\le
\theta(\nu , T\nu , x_{n},x_{n+1})[\rho(\nu ,x_{n})+\rho(x_{n},x_{n+1})+ \rho(Tx_{n},T\nu)]\\ &\le \theta(\nu , T\nu , x_{n},x_{n+1})[\rho(\nu ,x_{n})+\rho(x_{n},x_{n+1})+k\rho(x_{n}, Tx_{n})+k\rho(\nu,T\nu)].
\end{align*}
Taking the limit of the above inequalities we get:

$$\rho(\nu,T\nu)\le [0+0+0+k\rho(\nu,T\nu)]< \rho(\nu, T\nu).$$
Thus, $\rho(\nu,T\nu)=0$ which implies that $T\nu=\nu$ and hence
$\nu$ is a fixed point of $T.$
Finally, to show uniqueness assume there exist two fixed points of $T$ say $\nu $ and $\mu$ such that $\nu\neq \mu.$ By the contractive property of $T$ we have:
$$ \rho(\nu, \mu)=\rho(T\nu, T\mu)\le k \rho(\nu, \mu)<\rho(\nu, \mu)$$
which leads us to contradiction.Thus, $T$ has a unique fixed point as required.
\end{proof}

\section{Application}
In closing, we present the following application for our results.
\begin{theorem}\label{tm}
For any natural number $m\ge 3$ the equation
\begin{equation}\label{app1}
x^{m}+1= (m^{4}-1)x^{m+1}+m^{4}x
\end{equation}
has a unique real solution.
\end{theorem}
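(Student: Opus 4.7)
The plan is to recast the equation as a fixed-point problem $x=T(x)$ for an explicit self-contraction $T$ on a complete controlled rectangular $b$-metric space, then invoke Theorem \ref{one}. Since $x=0$ is clearly not a solution (the left-hand side equals $1$, the right-hand side equals $0$), nothing is lost by rearranging. Moving every term linear in $x$ to the left and factoring yields
\[
x\bigl[m^{4}+(m^{4}-1)x^{m}\bigr] = x^{m}+1,
\]
which suggests working with the map
\[
T(x) := \frac{x^{m}+1}{m^{4}+(m^{4}-1)x^{m}}.
\]
Fixed points of $T$ are precisely the solutions of the original equation at which the denominator is non-zero, so it suffices to produce a unique fixed point of $T$.

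I would work on $X=[0,1]$ endowed with the usual Euclidean distance; this is a complete metric space and therefore a complete controlled rectangular $b$-metric space with $\theta\equiv 1$. The first step is to verify that $T$ maps $X$ into itself: for $x\in[0,1]$ the numerator lies in $[1,2]$, while the denominator is at least $m^{4}$, so
\[
T(x)\in\Bigl[\tfrac{1}{2m^{4}-1},\tfrac{2}{m^{4}}\Bigr]\subset[0,1]
\]
for every $m\geq 3$. The second step is the contraction estimate. A direct differentiation gives
\[
T'(x)=\frac{m\,x^{m-1}}{\bigl(m^{4}+(m^{4}-1)x^{m}\bigr)^{2}},
\]
hence $|T'(x)|\leq m/m^{8}=1/m^{7}$ on $[0,1]$, and by the mean value theorem $|T(x)-T(y)|\leq k|x-y|$ with $k:=1/m^{7}\in(0,1)$.

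With $\theta\equiv 1$ the auxiliary hypothesis $\sup_{m>1}\lim_{n\to\infty}\theta(x_n,x_{n+1},x_{n+2},x_m)\leq 1/k$ of Theorem \ref{one} reduces to $1\leq m^{7}$, which is automatic for $m\geq 3$. All assumptions of Theorem \ref{one} are then in place, producing a unique fixed point of $T$ in $[0,1]$ and hence the claimed solution of the polynomial equation.

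The step I expect to be the genuine obstacle is choosing the right rearrangement. The naive move of isolating $m^{4}x$ produces a map whose derivative has size roughly $m+1$, which is not contractive; the key observation is that grouping \emph{every} $x$-linear contribution on the left generates a denominator of order $m^{4}$, and squaring it in $T'$ supplies the large factor $m^{-7}$ that makes the contraction so strong. A secondary point that might deserve a brief remark is uniqueness beyond $[0,1]$: the fixed-point theorem by itself gives uniqueness only inside the invariant set, so a coarse growth/sign inspection of the polynomial on $(-\infty,0)\cup(1,\infty)$ would be needed to justify uniqueness over all of $\mathbb{R}$.
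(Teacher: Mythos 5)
Your fixed-point argument is essentially the paper's own (same map $T$, same reduction to Theorem \ref{one}), and the part you carry out is actually tighter than the paper's version: restricting to $[0,1]$ keeps the denominator of $T$ bounded below by $m^{4}$, so your derivative bound $|T'|\le 1/m^{7}$ and the invariance $T([0,1])\subset[1/(2m^{4}-1),\,2/m^{4}]$ are both correct, whereas on the paper's domain $[-1,1]$ the denominator $(m^{4}-1)x^{m}+m^{4}$ drops to $1$ at $x=-1$ for odd $m$ and the claimed Lipschitz constant $1/m^{4}$ fails there. So existence and uniqueness of a solution \emph{in $[0,1]$} is established.

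The genuine gap is precisely the step you defer at the end, and it cannot be closed: the theorem as stated is false for odd $m$. Write the equation as $f(x)=0$ with $f(x)=(m^{4}-1)x^{m+1}+m^{4}x-x^{m}-1$. When $m$ is odd, $m+1$ is even, so $f(x)\to+\infty$ as $x\to-\infty$, while
\[
f(-1)=(m^{4}-1)-m^{4}+1-1=-1<0,
\]
so the intermediate value theorem produces a second real zero in $(-\infty,-1)$ in addition to the one your contraction finds near $1/m^{4}$. Concretely, for $m=3$ the equation $x^{3}+1=80x^{4}+81x$ has a root near $0.0123$ and another in $(-1.1,-1)$, since $f(-1)=-1<0$ and $f(-1.1)\approx 28.4>0$. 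The paper's proof buries this in the unproved opening assertion that $|x|>1$ admits no solution, which holds for $x>1$ but fails for $x<-1$ when $m$ is odd. Your "coarse sign inspection" would therefore have refuted the statement rather than completed the proof. For even $m$ the result is true, but then the fixed-point machinery is unnecessary: one checks directly that $f'(x)=(m^{4}-1)(m+1)x^{m}-mx^{m-1}+m^{4}>0$ for all real $x$, so $f$ is strictly increasing and has exactly one real zero.
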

\begin{proof}
First of all, note that if $|x|>1,$  Equation $(3.1),$ does not have a solution.
So, let $X=[-1,1]$ and for all $x,y\in X$ let $\rho(x,y)=|x-y|$\\
and $\alpha(x,y,u,v)=\max\{x,y,u,v\}+2.$ It is not difficult to see that
$(X,\rho)$ is a $\rho-$complete controlled rectangular $b-$metric space. Now, let
\begin{equation*}
Tx=\frac{x^{m}+1}{(m^{4}-1)x^{m}+m^{4}}
\end{equation*}
Notice that, since $m\ge 2,$ we can deduce that $m^{4}\ge 6.$ Thus,
\begin{align*}
\rho(Tx,Ty)&=|\frac{x^{m}+1}{(m^{4}-1)x^{m}+m^{4}}-\frac{y^{m}+1}{(m^{4}-1)y^{m}+m^{4}}| \\
& = |  \frac{x^{m}-y^{m}}{((m^{4}-1)x^{m}+m^{4})((m^{4}-1)y^{m}+m^{4})}|\\
&\le \frac{|x-y|}{m^{4}}\\
&\le \frac{|x-y|}{6}\\
&=  \frac{1}{6}\rho(x,y)
\end{align*}
Hence,
\begin{equation*}
\rho(Tx,Ty)\le  k \rho(x,y) \ \ \ \text{where} \ \ \ k=\frac{1}{6}
\end{equation*}
On the other hand, notice that for all $x_{0}\in X$ we have
\begin{equation*}
x_{n}=T^{n}x_{0}\le \frac{2}{m^{4}}
\end{equation*}
Thus,
\begin{eqnarray*}
\sup_{n\geq 1}\lim_{i\rightarrow \infty }\theta (x_{i},x_{i+1},x_{i+2},x_{n})
&= &\frac{2}{m^{4}}\\
&\le &2<6=\frac{1}{k}.
\end{eqnarray*}

Finally, note that $T$ satisfies all the hypothesis of Theorem \ref{one}. Therefore,
$T$ has a unique fixed point in $X,$ which implies that Equation $(3.1),$ has a unique real solution as desired.
\end{proof}

\section{Conclusion}
In closing, we would like to bring to the readers attention the following open questions;
\begin{question}
Let $(X,\rho)$ is a controlled rectangular $b-$metric space, and $T$ a self mapping on $X.$
Also, assume that for all distinct $x,y,Tx,Ty\in X$ there exists $k\in (0,1)$ such that
\begin{equation*}
\rho(Tx,Ty)\le k\theta(x,y,Tx,Ty)\rho(x,y)
\end{equation*}
what are the other hypothesis we should add so that $T$ has a unique fixed point in the whole space $X?$
\end{question}
\begin{question}
Let $(X,\rho)$ is a controlled rectangular $b-$metric space, and $T$ a self mapping on $X.$
Also, assume that for all distinct $x,y,Tx,Ty\in X$ there exists $k\in (0,1)$ such that
\begin{equation*}
\rho(Tx,Ty)\le \theta(x,y,Tx,Ty)[\rho(x,Tx)+\rho(y,Ty)]
\end{equation*}
what are the other hypothesis we should add so that $T$ has a unique fixed point in the whole space $X?$
\end{question}
\title{\textbf{Acknowledgements}}\\
The author would like to thank Prince Sultan University for funding this work through research group Nonlinear Analysis Methods in Applied Mathematics (NAMAM) group number RG-DES-2017-01-17.\\

\end{document}